\titleformat{\subsection}[runin]
       {\normalfont\bfseries}
       {\thesubsection}
       {0.5em}
       {}
       [.]
\newtheorem{theorem}{Theorem}
\newtheorem{proposition}[theorem]{Proposition}
\newtheorem{corollary}[theorem]{Corollary}
\theoremstyle{remark}
\numberwithin{equation}{section}
\newcommand{\R}{\mathbb R}
\newcommand{\Z}{{\mathbb Z}}
\newcommand{\N}{{\mathbb N}}
\newcommand{\C}{{\mathbb C}}
\begin{document}
\begin{center} {\Large The harmonic strength of shells of lattices \\and weighted theta series} \\ Koji Tasaka \\ School of Information Science and Technology \\ Aichi Prefectural University \end{center}


\section{Main results of the talk}
In this talk, I presented two kinds of different results related to the $D_4$ lattice, obtained in a joint work \cite{HNT} with M.~Hirao and H.~Nozaki.
Let me first overview the results.

Our interest is in a design-theoretic property of the $2m$-{\itshape shell} 
\[ (D_4)_{2m}=\{ (x_1,\ldots,x_4)\in \Z^4 \mid x_1^2+\cdots+x_4^2=2m\}\]
of {\itshape the $D_4$ lattice} (see \S\ref{sec:Dn} for the definition).
We first proved that for any $m\in \N$, the subset $\frac{1}{\sqrt{2m}}(D_4)_{2m}$ of the three-dimensional unit sphere $\mathbb{S}^3$ is an antipodal spherical $\{10,4,2\}$-design (the case $m=1$ was shown in \cite{Pache05}).
Here, for a subset $T\subset \N$, a finite subset $X$ of the unit sphere $\mathbb{S}^{d-1}$ of the $d$-dimensional Euclidean space $\R^d$ is a spherical design of harmonic index $T$ ({\itshape spherical $T$-design} for short) if the sum of all values $P(x)$ at $x\in X$ is zero for all harmonic polynomials $P(x)\in\R[x_1,\ldots,x_d] $ of degree $\ell\in T$.
Remark that a spherical $\{t,t-1,\ldots,2,1\}$-design is called a {\itshape spherical $t$-design} due to Delsarte-Geothals-Seidel \cite{DelsarteGoethalsSeidel77}.
The concept of a spherical $T$-design was first introduced in \cite{DS89} and recently studied in \cite{BBXYZ,BannaiOkudaTagami15,OkudaYu16,ZhuBannaiBannaiKimYu17}.
As a convention, since lattices are abelian groups, every positive odd integer is contained in $T$, so is omitted to write.

A basic problem is to determine the maximal subset $T\subset \N$, {\itshape the harmonic strength}, such that $\frac{1}{\sqrt{2m}}(D_4)_{2m}$ is a spherical $T$-design.
This problem is deeply connected to the non-vanishing problem of the Fourier coefficients of cusp forms.
As a prototype, it was established by Venkov, stated in \cite[Proposition B]{Pache05}, that the $2m$-shell $\frac{1}{\sqrt{2m}}(E_8)_{2m} \subset \mathbb{S}^7$ of the $E_8$ lattice is a spherical 8-design if and only if Ramanujan's $\tau$-function $\tau(m)$ is zero, where $\sum_{m\ge1} \tau(m) q^m =\eta(z)^{24}\ (q=e^{2\pi i z})$ is the unique cusp form of weight 12 on ${\rm SL}_2(\Z)$ and $\eta(z)=q^{1/24}\prod_{n\ge1}(1-q^n)$ is the Dedekind eta function.
According to Lehmer's conjecture \cite{Lehmer47}, $\tau(m)$ is believed to be non-zero for all $m\in \N$, or equivalently, $\frac{1}{\sqrt{2m}}(E_8)_{2m}$ would never be a spherical 8-design.
In our work \cite{HNT}, we pointed out an analogy to this for the $D_4$ lattice. 

\begin{theorem}\label{thm:main1}
For $m\in \N$, the harmonic strength of $\frac{1}{\sqrt{2m}}(D_4)_{2m}$ contains 6 if and only if the $m$th Fourier coefficient $\tau_2(m)$ of the unique newform $\eta(z)^8\eta(2z)^8=\sum_{m\ge1} \tau_2(m)q^m$ of weight 8 for $\Gamma_0(2)$ is zero.
\end{theorem}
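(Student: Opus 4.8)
The plan is to encode the degree-$6$ design condition as the vanishing of the $m$th Fourier coefficient of a weighted theta series of $D_4$, to identify the one-dimensional space of modular forms containing that theta series, and to match it with $\eta(z)^8\eta(2z)^8$. \emph{Step 1 (weighted theta series).} For a harmonic polynomial $P\in\R[x_1,\dots,x_4]$ of degree $\ell$, set $\theta^P_{D_4}(z)=\sum_{x\in D_4}P(x)\,q^{(x,x)/2}$ with $q=e^{2\pi i z}$, so that its $m$th coefficient is exactly $\sum_{x\in(D_4)_{2m}}P(x)$. By the classical theory of theta series with spherical (harmonic) coefficients (Hecke, Schoeneberg), since $D_4$ is even of rank $4$ with level $2$ --- indeed $\theta_{D_4}=1+24q+24q^2+96q^3+\cdots$ is the Eisenstein series generating $M_2(\Gamma_0(2))$ --- and has discriminant $4$, so that the attached quadratic character is trivial, one gets $\theta^P_{D_4}\in M_{2+\ell}(\Gamma_0(2))$, and $\theta^P_{D_4}\in S_{2+\ell}(\Gamma_0(2))$ whenever $\ell>0$. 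Thus ``$6$ lies in the harmonic strength of $\tfrac1{\sqrt{2m}}(D_4)_{2m}$'' means precisely that the $m$th coefficient of $\theta^P_{D_4}$ vanishes for every harmonic $P$ of degree $6$.

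\emph{Step 2 (reduction to a single invariant).} Let $G=\mathrm{Aut}(D_4)$ (order $1152$, isomorphic to $W(F_4)$), acting on each shell $(D_4)_{2m}$. Replacing $P$ by its $G$-average $\bar P=\tfrac1{|G|}\sum_{g\in G}P\circ g$ leaves $\sum_{x\in(D_4)_{2m}}P(x)$ unchanged, and $\bar P$ is a harmonic $G$-invariant of degree $\ell$. Since the invariant ring of $G$ is a polynomial ring with generators of degrees $2,6,8,12$, the space of harmonic $G$-invariants is zero in degrees $1,\dots,5$ and one-dimensional in degree $6$; fix a generator $P_0$. (This already shows every normalized shell is automatically a spherical $\{2,4,10\}$-design, the companion fact in the talk.) Hence $6$ lies in the harmonic strength iff $\sum_{x\in(D_4)_{2m}}P_0(x)=0$, i.e. iff the $m$th Fourier coefficient of the single cusp form $\theta^{P_0}_{D_4}\in S_8(\Gamma_0(2))$ vanishes.

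\emph{Step 3 (identifying the cusp form).} The dimension formula gives $\dim S_8(\Gamma_0(2))=1$, and $f(z):=\eta(z)^8\eta(2z)^8=\sum_{m\ge1}\tau_2(m)q^m$ is a nonzero element of it --- in fact the unique normalized newform, as $S_8(\mathrm{SL}_2(\Z))=0$. Therefore $\theta^{P_0}_{D_4}=c\,f$ for some constant $c$, so that $\sum_{x\in(D_4)_{2m}}P_0(x)=c\,\tau_2(m)$ for all $m$. Provided $c\neq0$, this is exactly the asserted equivalence.

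\emph{Step 4 (nonvanishing of $c$ --- the crux).} What really needs work is $\theta^{P_0}_{D_4}\not\equiv0$. I would settle this via the shell $m=1$: $(D_4)_2$ is antipodal and, by Step 2, a spherical $\{2,4\}$-design, so if $6$ were in its harmonic strength it would be a full spherical $6$-design on $\mathbb{S}^3$; but the Delsarte--Goethals--Seidel lower bound forces any spherical $6$-design on $\mathbb{S}^3$ to have at least $\binom{6}{3}+\binom{5}{2}=30$ points, whereas $|(D_4)_2|=24$. Hence $c\,\tau_2(1)=\sum_{x\in(D_4)_2}P_0(x)\neq0$, so $c\neq0$; alternatively one writes down $P_0$ explicitly and evaluates the sum over $(D_4)_2$ directly. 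Apart from assembling the classical modularity in Step 1, the case-specific ingredients are the invariant-theoretic input of Step 2 and this nonvanishing, and these are precisely what make the $D_4$/$\eta(z)^8\eta(2z)^8$ analogy with the $E_8$/$\eta(z)^{24}$ story go through.
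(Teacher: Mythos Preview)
Your argument is correct and shares the paper's core strategy: weighted theta series of $D_4$ with degree-$6$ harmonic coefficients lie in $S_8(\Gamma_0(2))$, a one-dimensional space generated by $\eta(z)^8\eta(2z)^8$, so the design condition at degree $6$ collapses to the vanishing of $\tau_2(m)$. The differences are in the auxiliary reductions. The paper (via Proposition~\ref{prop:modular-to-design} together with the folklore equality ${\rm Im}\,\vartheta_{D_4,\ell}=S_{2+\ell}^{\rm new}(\Gamma_1(2))_\R$) goes straight to the one-dimensional image of $\vartheta_{D_4,6}$, whereas your Step~2 first averages over ${\rm Aut}(D_4)\cong W(F_4)$ and invokes its invariant degrees $2,6,8,12$ to isolate a single $P_0$; this detour is logically unnecessary once $\dim S_8(\Gamma_0(2))=1$ is in hand, but it recovers the $\{2,4,10\}$-design statement for free. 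Your Step~4, establishing ${\rm Im}\,\vartheta_{D_4,6}\neq 0$ via the Fisher bound $b_{4,6}=30>24=|(D_4)_2|$, is a clean self-contained replacement for what the paper leaves to the cited folklore.
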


The non-vanishing conjecture on the $\tau_2$-function is not formulated in the literature as far the author knows (see \cite{Cohen} for a similar project).
As quick numerical evidence by PARI-GP, one can easily check $\tau_2(m)\neq0$ up to $m\le 10^8$.
In \cite{HNT}, we obtained $\tau_2(p)\neq0$ for any prime $p$ which is not congruent to $-1$ modulo 15 (note that the minimum of $m\in \N$ such that $\tau_2(m)=0$ is prime).
We will discuss an extension of the above result to other positive even integers $\ell$. See \S3.

\

The next result is about a characterization (a classification) of spherical $\{10,4,2\}$-designs in $\mathbb{S}^3$.
For an {\itshape antipodal} subset $X \subset \mathbb{S}^{d-1}$ (i.e., $-x\in X$ for all $x\in X$), a subset $X'$ of $X$ is said to be a {\itshape half set} if $X$ is a disjoint union of $X'$ and $-X'$.
It follows that any half set $\frac{1}{\sqrt{2}}(D_4)_2'$ of $\frac{1}{\sqrt{2}}(D_4)_2$ (the 2-shell $(D_4)_2$ is known as the $D_4$ root system) is a spherical $\{10,4,2\}$-design with 12 points in $\mathbb{S}^3$.
We can also compute the inner product 
\[ A\left( \frac{1}{\sqrt{2}}(D_4)_2'\right) = \left\{-\frac12,0,\frac12\right\},\]
where for $X\subset \mathbb{S}^{d-1}$ we write $A(X)=\{\langle x,y\rangle \in [-1,1)\mid x,y\in X, x\neq y\}$ with the Euclidean inner product  $\langle x,y\rangle=x_1y_1+\cdots+x_dy_d \ (x,y\in \R^d)$.
These data can be used in the linear programming method to prove that, if a finite subset $Y\subset \mathbb{S}^3$ is a spherical $\{10,4,2\}$-design, then $|Y|\ge12$ (interestingly, our proof indicates that $Y$ attains the lower bound if and only if $A(Y)\subset\{-1/2,0,1/2\}$). 
This shows that the set $\frac{1}{\sqrt{2}}(D_4)_2$ is a `{\itshape tight}' antipodal spherical $\{10,4,2\}$-design. 
In our work, we classify such designs up to orthogonal transformations.
Here we note the fact\footnote{This is a consequence of the fact that the group $O(\R^d)$ acts on the space ${\rm Harm}_\ell(\R^d)$ of harmonic polynomials, i.e., $(\sigma^\ast P)(x):=P(\sigma^{-1}(x))$ is again a harmonic polynomial for any $P\in {\rm Harm}_\ell(\R^d)$ and $\sigma\in O(\R^d)$.} that if a finite subset $X\subset \mathbb{S}^{d-1}$ is a spherical $T$-design, then the set $\sigma(X)=\{\sigma(x)\mid x\in X\}$ is also a spherical $T$-design for any orthogonal transformation $\sigma\in O(\R^d) =\{\sigma:\R^d\rightarrow \R^d:\ \mbox{$\R$-linear map s.t.}\ \langle \sigma(x),\sigma(y)\rangle=\langle x,y\rangle, \ \forall x,y\in \R^d\}$. 

\begin{theorem}\label{thm:main2}
Every antipodal spherical $\{10,4,2\}$-design with $24$ points on $\mathbb{S}^3$ is an orthogonal transformation of $\frac{1}{\sqrt{2}}(D_4)_2$.
\end{theorem}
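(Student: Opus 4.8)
The plan is to pass to a half set, convert the design constraints into a rigidity statement for a small integer matrix, read off from it the orthogonality pattern of the $12$ lines spanned by the configuration, and then recognise that pattern. So let $X\subset\mathbb S^3$ be antipodal with $|X|=24$ and a spherical $\{10,4,2\}$-design, and fix a half set $X'$. For a harmonic polynomial $P$ of even degree one has $P(-x)=P(x)$, so $\sum_{x\in X'}P(x)=\tfrac12\sum_{x\in X}P(x)=0$ when $\deg P\in\{2,4,10\}$, and $X'$ is again a spherical $\{10,4,2\}$-design; by the linear programming bound recalled above it has at least $12$ points, hence meets the bound, so $A(X')\subseteq\{-1/2,0,1/2\}$. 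Also $X$ is a spherical $2$-design (degree $1$ harmonics vanish by antipodality, degree $2$ by hypothesis), so $\sum_{x\in X}xx^{\top}=6I_4$ is invertible; thus $X'$ spans $\R^4$ and its Gram matrix is $H=I+\tfrac12 S$, where $S$ is symmetric with zero diagonal, entries in $\{-1,0,1\}$, and $\operatorname{rank}(I+\tfrac12 S)=4$.

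Next, the rigidity step. The degree-$2$ design equation for $X'$ — with the Gegenbauer polynomial $G_2$ on $\mathbb S^3$, for which $G_2(1)=1$, $G_2(\pm\tfrac12)=0$ and $G_2(0)=-\tfrac13$ — forces exactly $36$ ordered pairs of distinct points of $X'$ to be orthogonal, whence $\operatorname{tr}S=0$ and $\operatorname{tr}S^2=12\cdot11-36=96$. Because $\operatorname{rank}(I+\tfrac12 S)=4$, the eigenvalue $-2$ of $S$ has multiplicity $8$; the remaining four eigenvalues then sum to $16$ with sum of squares $64$, so by equality in the Cauchy--Schwarz inequality they are all equal to $4$. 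Hence $S$ has spectrum $\{4^{(4)},(-2)^{(8)}\}$, i.e.\ $S^2=2S+8I$, and reading the diagonal of this identity, every point of $X'$ has exactly three orthogonal partners and eight partners at inner product $\pm\tfrac12$. In particular the \emph{orthogonality graph} $\Gamma$ on $X'$, with edges the orthogonal pairs, is $3$-regular; it is unchanged under the sign flips $v\mapsto-v$ (which only permute the $\pm\tfrac12$-relations), hence is an invariant of $X$.

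The heart of the argument is to prove $\Gamma\cong 3K_4$. From the off-diagonal identities $\sum_k S_{ik}S_{kj}=2S_{ij}$ one computes that the number of indices $k$ with $S_{ik}S_{kj}\neq0$ equals $6$ (when $v_i\perp v_j$) or $4$ (otherwise), plus the number $c$ of common orthogonal partners of $v_i,v_j$; since that signed sum of $\pm1$'s equals $0$ or $\pm2$, a parity count gives $c\in\{0,2\}$ in both cases. Thus every edge of $\Gamma$ lies in $0$ or $2$ triangles, and an edge lying in $2$ triangles generates a $K_4$ which, by $3$-regularity, is a full component; so $\Gamma$ is a disjoint union of copies of $K_4$ and of a triangle-free $3$-regular graph. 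If $\Gamma$ has a $K_4$-component we may take it to be the standard basis $e_1,\dots,e_4$; then no other vector of $X'$ is orthogonal to any $e_i$, so each has all four coordinates equal to $\pm\tfrac12$, and the remaining eight vectors must be the eight lines $[\tfrac12(\pm1,\pm1,\pm1,\pm1)]$, whose orthogonality graph is $2K_4$ (by the even/odd weight splitting of the cube); hence $\Gamma\cong 3K_4$. It remains to exclude the case that $\Gamma$ is triangle-free; then $\Gamma$ is connected of girth $4$ (the triangle-free $3$-regular graphs on at most $6$ vertices already violate the $0$-or-$2$ condition, so no disconnected example fits into $12$ vertices), and I would rule this out by a finite case analysis of the admissible graphs against the positive semidefiniteness and rank $4$ of $H$. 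This last exclusion is the step I expect to be the main obstacle.

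Finally, once $\Gamma\cong 3K_4$, the $12$ vectors split into three orthonormal frames $F_1=\{e_1,\dots,e_4\}$, $F_2$ and $F_3$ of $\R^4$; as above each vector of $F_2\cup F_3$ lies among the eight lines $[\tfrac12(\pm1,\pm1,\pm1,\pm1)]$, of which exactly two $4$-subsets are orthonormal frames, so $\{F_2,F_3\}$ is that pair. Therefore, in suitable coordinates, $X=\pm X'$ is the set of $24$ vectors $\{\pm e_i\}\cup\{\tfrac12(\pm1,\pm1,\pm1,\pm1)\}$ — the regular $24$-cell — which is an orthogonal image of $\tfrac1{\sqrt2}(D_4)_2$. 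Since two finite spanning subsets of $\mathbb S^3$ with the same Gram matrix up to relabelling differ by an element of $O(\R^4)$, it follows that $X$ is an orthogonal transformation of $\tfrac1{\sqrt2}(D_4)_2$, which completes the classification.
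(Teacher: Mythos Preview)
The paper does not actually prove Theorem~\ref{thm:main2}: it records only the linear-programming input (the bound $|Y|\ge 12$ for a spherical $\{10,4,2\}$-design $Y\subset\mathbb S^3$, with equality forcing $A(Y)\subset\{-\tfrac12,0,\tfrac12\}$) and otherwise defers to \cite{HNT}. Your proposal reproduces that first step and then develops a clean spectral argument: the Gram identity $H=I+\tfrac12 S$, the spectrum $\{4^{(4)},(-2)^{(8)}\}$ of $S$, the relation $S^2=2S+8I$, the $3$-regularity of the orthogonality graph $\Gamma$, and the parity constraint $c\in\{0,2\}$ on common orthogonal partners are all correct and nicely derived. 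The reduction ``an edge in two triangles forces a $K_4$-component'' is also right, and once a $K_4$ is present your identification with the $24$-cell is fine.

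The genuine gap is exactly the one you flag yourself: you do not exclude the possibility that $\Gamma$ is a connected, triangle-free, $3$-regular graph on $12$ vertices satisfying the $\{0,2\}$-common-neighbour condition. That class is not empty in the abstract (for instance the $3$-cube $Q_3$ on $8$ vertices already meets all your combinatorial constraints), so the ``finite case analysis'' you allude to really has content: one must produce the candidate graphs on $12$ vertices and then show that no signing $S$ supported on $J-I-A(\Gamma)$ can satisfy $S^2=2S+8I$ with $I+\tfrac12 S\succeq 0$ of rank $4$. Until that is done, the classification is incomplete. Since this paper gives no argument of its own here, I cannot tell you whether \cite{HNT} handles this step by the same graph/sign case analysis you envisage or by a different device (e.g.\ exploiting the harmonic index $10$ condition directly, or invoking known classifications of $3$-distance sets); either way, this is precisely the missing idea in your write-up, and it is the step on which the whole uniqueness hinges.
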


Theorem \ref{thm:main2} is called a {\itshape uniqueness theorem} in the study of classification of spherical designs (see e.g., \cite{BannaiSloane81,BoyvalenkovDanev01}).
As an application of the uniqueness of the $D_4$ root system, we gave a new proof of the uniqueness of the $D_4$ lattice as even integral lattices of level 2 in $\R^4$.
Another application goes to a decomposition of each shell of the $D_4$ lattice in terms of orthogonal transformations of the $D_4$ root system.
We believe such decomposition would shed new light on the study of shells of a lattice.

\section{Context of our works}

Given a finite subset $X \subset \mathbb{S}^{d-1}$, our principal problem is to determine a subset $T(X)$ of $\N$ defined by
\[ T(X)= \left\{\ell \in \N \ \middle| \  \sum_{x\in X}P(x)=0, \quad \forall P\in {\rm Harm}_\ell (\R^d)\right\},\] 
where ${\rm Harm}_\ell (\R^d)\subset \R[x_1,\ldots,x_d]$ denotes the subspace of harmonic polynomials (i.e., annihilated by the Laplacian $\sum_{j=1}^d \partial^2/\partial x_j^2$) of homogeneous degree $\ell$.
In the terminology of the design theory, the set $T(X)$ of degrees is called the {\itshape harmonic strength} of $X$.
As explained in \S1, the set $X$ is a spherical $T(X)$-design.
 
Given the coordinates of all points in $X$ and the basis of the space ${\rm Harm}_\ell (\R^d)$, the above problem will be solved.
For example, when $d=2$, 
there is an explicit basis $\{f_{1,\ell},f_{2,\ell}\}$ of ${\rm Harm}_\ell (\R^2)$ such that $f_{1,\ell}(\cos\theta,\sin\theta) = \cos(\ell \theta)$ and $ f_{2,\ell}(\cos\theta,\sin\theta) = \sin(\ell \theta)$.
Let us consider the set $X_{\theta_1,\theta_2}=\{(1,0),(\cos\theta_1,\sin\theta_1),(\cos\theta_2,\sin\theta_2)\}\subset\mathbb{S}^1$ of vertices of a triangle inscribed in the unit circle.
Then, its harmonic strength can be computed by solving the equations $1+\cos(\ell \theta_1)+\cos(\ell \theta_2)=0$ and $\sin(\ell\theta_1)+\sin(\ell\theta_2)=0$.
Indeed, for an equilateral triangle (resp.~an isosceles triangle), we can show $T(X_{\frac{2\pi}{3},\frac{4\pi}{3}} ) = \{\ell \in \N \mid \ell \equiv 1,2\mod 3\}$ (resp.~$T(X_{\frac{\pi}{3},\frac{2\pi}{3}} ) = \{\ell \in \N \mid \ell \equiv 2,4\mod 6\}$).
This proves that the harmonic strength of the set of vertices of any equilateral triangle is given by $ \{\ell \in \N \mid \ell \equiv 1,2\mod 3\}$ (recall that the harmonic strength satisfies $T(\sigma(X))=T(X)$ for any orthogonal transformation $\sigma\in O(\R^d)$).

The finite subsets of $\mathbb{S}^{d-1}$ we are dealing with are the shells of a lattice in $\R^d$.
In this talk, a lattice $\Lambda$ in $\R^d$ means a full-ranked free $\Z$-module, namely, there is a basis $\{b_1,\ldots,b_d\}$ (row vectors) of $\R^d$ such that $\Lambda = \Z b_1+\cdots + \Z b_d$. 
For $m\ge0$, we define the $m$-shell $ \Lambda_m$ of $\Lambda$ by
\[ \Lambda_m = \{x\in \Lambda \mid \langle x,x\rangle=m\}.\]
Our actual problem is then to compute the harmonic strength of (non-empty) finite subsets $\frac{1}{\sqrt{m}} \Lambda_m$ of $\mathbb{S}^{d-1}$ for all $m>0$.
Unlike the triangles, this problem seems difficult; one obvious reason (from a computational obstruction) is that the cardinality is a polynomial growth $|\Lambda_m|=O(m^{d/2-1})$ as $m\rightarrow \infty$; another reason is in connection with the non-vanishing problem of the Fourier coefficients of cusp forms, as in Theorem \ref{thm:main1}.

In the next section, we will summarize some known results on the harmonic strength of some lattices and connections with the theory of modular forms.
Before we get into this, let us give a quick background on our principal problem.


The following property would be one of the motivations for determining the harmonic strength (although this is a concept of spherical $t$-designs).
\begin{proposition}
If $T(X)$ contains every $\ell\in \N$ less than or equal to $t$, then we have
\begin{equation}\label{eq:spherical-design}
\frac{1}{|\mathbb{S}^{d-1}|} \int_{\mathbb{S}^{d-1}} F(\xi)d\rho(\xi) = \frac{1}{|X|} \sum_{x\in X}F(x),\quad \forall F(x)\in \R[x_1,\ldots,x_d]\ \mbox{with}\ \deg F \le t,
\end{equation}
where $\rho$ is the usual Haar measure on $\mathbb{S}^{d-1}$ and $|\mathbb{S}^{d-1}|=\int_{\mathbb{S}^{d-1}} d\rho(\xi) $. 
\end{proposition}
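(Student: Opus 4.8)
The plan is to reduce \eqref{eq:spherical-design} to the single classical fact that a harmonic polynomial of positive degree has vanishing average over the sphere, the vanishing of the corresponding sum over $X$ being exactly the hypothesis $\ell\in T(X)$.

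First I would observe that both sides of \eqref{eq:spherical-design} are $\R$-linear in $F$ and depend only on the restriction of $F$ to $\mathbb{S}^{d-1}$, since every point of $X$ and every point of integration lies on the unit sphere. Then I would invoke the standard decomposition of polynomials into solid spherical harmonics: by repeatedly writing a homogeneous polynomial as a harmonic polynomial plus $\langle x,x\rangle$ times a polynomial of degree two smaller, one obtains
\[
\R[x_1,\dots,x_d]_{\le t}=\bigoplus_{j\ge0}\langle x,x\rangle^{\,j}\Bigl(\bigoplus_{0\le \ell\le t-2j}{\rm Harm}_\ell(\R^d)\Bigr).
\]
On $\mathbb{S}^{d-1}$ one has $\langle x,x\rangle=1$, so every $F$ with $\deg F\le t$ agrees on the sphere with a finite sum $\sum_{\ell=0}^{t}P_\ell$, $P_\ell\in{\rm Harm}_\ell(\R^d)$. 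By linearity it therefore suffices to verify \eqref{eq:spherical-design} when $F=P_\ell$ for each $0\le\ell\le t$.

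For $\ell=0$ the polynomial $P_0$ is constant and both sides equal $P_0$. For $1\le\ell\le t$ the right-hand side is $\frac1{|X|}\sum_{x\in X}P_\ell(x)=0$ because $\ell\in T(X)$ by assumption. For the left-hand side, the functional $P\mapsto\frac1{|\mathbb{S}^{d-1}|}\int_{\mathbb{S}^{d-1}}P\,d\rho$ on ${\rm Harm}_\ell(\R^d)$ is invariant under $O(\R^d)$ (as $\rho$ is the Haar measure and $O(\R^d)$ preserves ${\rm Harm}_\ell(\R^d)$), and ${\rm Harm}_\ell(\R^d)$ contains no nonzero $O(\R^d)$-invariant element for $\ell\ge1$; equivalently, the mean value property applied to the harmonic function $P_\ell$ gives $\frac1{|\mathbb{S}^{d-1}|}\int_{\mathbb{S}^{d-1}}P_\ell\,d\rho=P_\ell(0)=0$. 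Hence both sides vanish, and \eqref{eq:spherical-design} follows for all $F$ with $\deg F\le t$.

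The only point needing care is the reduction step: one must use the harmonic decomposition of the whole space $\R[x_1,\dots,x_d]_{\le t}$, not merely of its top homogeneous piece, and note that restriction to $\mathbb{S}^{d-1}$ collapses the factors $\langle x,x\rangle^{\,j}$. Granting this, the proposition is immediate from the classical properties of spherical harmonics and needs no input beyond the definition of $T(X)$; I would not expect a genuine obstacle here, since the statement is essentially the textbook equivalence between having harmonic strength $\{1,\dots,t\}$ and the cubature identity \eqref{eq:spherical-design}.
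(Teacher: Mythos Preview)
Your proof is correct and follows essentially the same route as the paper's: decompose a polynomial into a sum of $\langle x,x\rangle^j$ times harmonic pieces, restrict to the sphere, and use that a harmonic polynomial of positive degree integrates to zero over $\mathbb{S}^{d-1}$ while the corresponding sum over $X$ vanishes by the hypothesis $\ell\in T(X)$. Your write-up is in fact slightly more complete, since the paper treats only even $t$ and homogeneous top-degree $F$ ``for simplicity'', whereas you carry out the reduction for all $F$ with $\deg F\le t$ and justify the vanishing integral via the mean value property rather than orthogonality---but these are cosmetic differences, not different ideas.
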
 
\begin{proof}
The proof uses the well-known properties of harmonic polynomials.
For simplicity, we only prove the case when $t\in \N$ is even.
Recall that every polynomial $F(x) \in \R[x_1,\ldots,x_d]$ of homogeneous degree $t$ can be written in the form
\[ F(x) = P_0(x)+r_d(x)P_1(x)+r_d(x)^2P_2(x)+\cdots + r_d(x)^{\frac{t}{2}}P_{\frac{t}{2}}\]
for some $P_j(x)\in {\rm Harm}_{t-2j} (\R^d)$, where $r_d(x)=x_1^2+\cdots+x_d^2$.
Note that $P_{\frac{t}{2}}\in \R$ is a constant.
Due to the orthogonality of harmonic polynomials of different degrees, one has $\int_{\mathbb{S}^{d-1}} P_j(\xi)d\rho(\xi) = 0$ for $j=0,\ldots,\frac{t}{2}-1$.
Thus, we obitan
\[ \frac{1}{|\mathbb{S}^{d-1}|} \int_{\mathbb{S}^{d-1}} F(\xi)d\rho(\xi)  = \frac{1}{|\mathbb{S}^{d-1}|} \int_{\mathbb{S}^{d-1}} P_{\frac{t}{2}} d\rho(\xi)  = P_{\frac{t}{2}} =\frac{1}{|X|}\sum_{x\in X} P_{\frac{t}{2}} = \frac{1}{|X|} \sum_{x\in X}F(x),\]
where for the last equality, we have used the assumption that $1,2,\ldots, t\in T(X)$.
\end{proof}

A finite subset $X$ of $\mathbb{S}^{d-1}$ satisfying \eqref{eq:spherical-design} is a spherical $t$-design.
As a basic fact, for any $d$ (dimension) and $t$ (strength), there exists a finite subset of $\mathbb{S}^{d-1}$ that is a spherical $t$-design, and its cardinality is bounded below by $b_{d,t}$ (called a Fisher-type bound), where 
\[ b_{d,t}= \begin{cases} \binom{d+e-1}{e}+\binom{d+e-2}{e-1} & t=2e, \\ 2\binom{d+e-1}{e} & t=2e+1.\end{cases}\]
A spherical $t$-design in $\mathbb{S}^{d-1}$ whose cardinality attains the lower bound $b_{d,t}$ is said to be {\itshape tight}.
For example, the equilateral triangle $X_{\frac{2\pi}{3},\frac{4\pi}{3}}$ is a tight spherical 2-design in $\mathbb{S}^1$, because $b_{2,2}=3$.
In general, tight designs have properties superior to non-tight designs, and do not exist for some pairs of $(d,t)$.
For more on these studies, see Bannai-Bannai \cite{BannaiBannai09}.
It should be mentioned here that a fundamental tool to give lower bounds for the cardinality is the linear programming method established by Delsarte-Goethals-Seidel \cite{DelsarteGoethalsSeidel77}.
Its machinery can be also applied to our case, so we can study tight spherical $T$-designs for a given $T\subset \N$.
For this study, we also refer to \cite{BBXYZ,BannaiOkudaTagami15,OkudaYu16,ZhuBannaiBannaiKimYu17} and references therein.
Our problem with the harmonic strength of shells is not a little influenced by these recent developments.

\section{Modular forms and spherical designs}

\subsection{How to use modular forms}
We outline the method, established by Venkov \cite{Venkov84}, to compute the harmonic strength of shells of lattices from the theory of modular forms.
For simplicity, we let $M_{k}(\Gamma_1(N))_\R$ be the $\R$-vector space of modular forms of weight $k$ for $\Gamma_1(N)$ whose Fourier coefficients are real numbers, where $\Gamma_1(N)=\{\gamma\in {\rm SL}_2(\Z) \mid \gamma \equiv (\begin{smallmatrix}1&\ast \\ 0&1\end{smallmatrix})\bmod N\}$ is a congruence subgroup of level $N$ of ${\rm SL}_2(\Z) $.

Let $\Lambda\subset \R^d$ be an even integral lattice.
Namely, $\langle x,x\rangle \in 2\Z$ holds for all $x\in \Lambda$ and $\Lambda$ is a subset of the dual lattice $\Lambda^\ast =\{y\in \R^d\mid \langle x,y\rangle \in \Z\ \mbox{for all}\ x\in \Lambda\}$.
For a harmonic polynomial $P\in {\rm Harm}_{\ell}(\R^d)$ and $m\in \Z_{\ge0}$, we write 
\[a_{\Lambda,P}(m)=\sum_{x\in \Lambda_{2m}}P(x)\]
 and define the {\itshape weighted theta series} $\theta_{\Lambda,P}(z)$ by 
\[ \theta_{\Lambda,P}(z) = \sum_{m\ge0} a_{\Lambda,P}(m) q^m \quad (q=e^{2\pi iz} ),\]
which is a holomorphic function on the complex upper half-plane $\{z\in \C \mid {\rm Im} \, z>0\}$.
For example, taking $P=1$ of degree 0 gives the generating series of the cardinality of each $2m$-shells of $\Lambda$.
\[\theta_{\Lambda,1}(z) = \sum_{m\ge0} | \Lambda_{2m}| q^m.\]
From the works by Hecke and Schoenberg, we see that the series $\theta_{\Lambda,P}(z)$ lies in $M_{k}(\Gamma_1(N))_\R$ (see \cite[Chap.3]{Ebeling}), where $N$ is the minimum of $N\in \N$ such that $N \langle x,x\rangle \in 2\Z$ for all $x\in \Lambda^\ast$, i.e.~the level of $\Lambda$.
We accordingly obtain the $\R$-linear map
\[ \vartheta_{\Lambda,\ell} : {\rm Harm}_{\ell}(\R^d) \longrightarrow M_{d/2+\ell}(\Gamma_1(N))_\R ,\qquad P\longmapsto \theta_{\Lambda,P}(z).\]
When $\ell\ge1$, the image 
\begin{equation*}
 {\rm Im}\, \vartheta_{\Lambda,\ell}= \langle \theta_{\Lambda,P}(z) \mid P\in {\rm Harm}_{\ell}(\R^d)\rangle_\R 
\end{equation*}
is a subspace of the $\R$-vector space $S_{d/2+\ell}(\Gamma_1(N))_\R$ of cusp forms in $M_{d/2+\ell}(\Gamma_1(N))_\R$.
Note that $ {\rm Im}\, \vartheta_{\Lambda,\ell}=\varnothing$ holds for $\ell$ odd, because the $m$-shell of $\Lambda$ is antipodal (i.e., $-x\in \Lambda_m$ for any $x\in \Lambda_m$).
From this, the harmonic strength $T\big(\frac{1}{\sqrt{m}} \Lambda_m\big)$ always contains all positive odd integers, so we will not consider odd $\ell$ below.

The following proposition provides an application of the theory of modular forms to design theories, which was used by Venkov in his study on even unimodular lattices \cite{Venkov84}.

\begin{proposition}\label{prop:modular-to-design}
Let $\Lambda$ be an even integral lattice in $\R^d$.
For $\ell \in 2\N$ even, there are harmonic polynomials $P_1,\ldots,P_g\in {\rm Harm}_\ell (\R^d)$ such that $\{\theta_{\Lambda,P_1}(z),\ldots,\theta_{\Lambda,P_g}(z)\}$ forms a basis of ${\rm Im}\, \vartheta_{\Lambda,\ell}$.
Then, for any positive integer $m$, we have
\[ \ell \in T\left(\frac{1}{\sqrt{2m}} \Lambda_{2m}\right) \Longleftrightarrow a_{\Lambda,P_1}(m)=\cdots =a_{\Lambda,P_g}(m)=0, \]
unless $ \Lambda_m$ is empty.
In particular, we have $\{\ell \in 2\N \mid \dim \big( {\rm Im}\, \vartheta_{\Lambda,\ell} \big)=0\} \subset T\left(\frac{1}{\sqrt{2m}} \Lambda_{2m}\right)$.
\end{proposition}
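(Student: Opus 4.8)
The plan is to split the proof into three essentially independent steps: producing the distinguished generators $P_1,\dots,P_g$; translating the design condition into the vanishing of weighted theta coefficients over all harmonic $P$ of degree $\ell$; and collapsing that infinite family of conditions onto the finite basis. For the first step I would simply observe that $\vartheta_{\Lambda,\ell}$ is $\R$-linear with finite-dimensional source ${\rm Harm}_\ell(\R^d)$ (alternatively, by the Hecke--Schoenberg result quoted above, with image contained in the finite-dimensional space $S_{d/2+\ell}(\Gamma_1(N))_\R$), so that $g:=\dim_\R{\rm Im}\,\vartheta_{\Lambda,\ell}<\infty$. Then I would choose an $\R$-basis of ${\rm Im}\,\vartheta_{\Lambda,\ell}$ and, for each basis vector, a harmonic preimage $P_j$ under $\vartheta_{\Lambda,\ell}$; by construction $\{\theta_{\Lambda,P_1},\dots,\theta_{\Lambda,P_g}\}$ is then a basis of ${\rm Im}\,\vartheta_{\Lambda,\ell}$. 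This step is routine linear algebra.

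For the second step, fix $m\in\N$ and assume $\Lambda_{2m}\neq\varnothing$ (if this shell is empty, both sides of the asserted equivalence hold vacuously: the empty subset of $\mathbb{S}^{d-1}$ lies in every $T$, and each $a_{\Lambda,P_j}(m)$ is an empty sum). Since any $P\in{\rm Harm}_\ell(\R^d)$ is homogeneous of degree $\ell$ and $x\mapsto x/\sqrt{2m}$ is a bijection of $\Lambda_{2m}$ onto $\frac{1}{\sqrt{2m}}\Lambda_{2m}\subset\mathbb{S}^{d-1}$, one has
\[ \sum_{y\in\frac{1}{\sqrt{2m}}\Lambda_{2m}}P(y)\;=\;(2m)^{-\ell/2}\sum_{x\in\Lambda_{2m}}P(x)\;=\;(2m)^{-\ell/2}\,a_{\Lambda,P}(m), \]
and, since $(2m)^{-\ell/2}\neq0$, this yields
\[ \ell\in T\!\left(\tfrac{1}{\sqrt{2m}}\Lambda_{2m}\right)\iff a_{\Lambda,P}(m)=0\quad\text{for every }P\in{\rm Harm}_\ell(\R^d). \]

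For the third step I would prove that the right-hand condition just obtained is equivalent to $a_{\Lambda,P_1}(m)=\dots=a_{\Lambda,P_g}(m)=0$. The implication ``$\Rightarrow$'' is immediate, since each $P_j$ is a harmonic polynomial of degree $\ell$. Conversely, for an arbitrary $P\in{\rm Harm}_\ell(\R^d)$ one has $\theta_{\Lambda,P}\in{\rm Im}\,\vartheta_{\Lambda,\ell}$, hence $\theta_{\Lambda,P}=\sum_{j=1}^{g}c_j\,\theta_{\Lambda,P_j}$ for some $c_1,\dots,c_g\in\R$, and comparing the coefficients of $q^m$ gives $a_{\Lambda,P}(m)=\sum_{j=1}^{g}c_j\,a_{\Lambda,P_j}(m)=0$. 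Combined with the second step this is the claimed equivalence. The ``in particular'' assertion is the degenerate case $g=0$: then there are no conditions $a_{\Lambda,P_j}(m)=0$ to impose (equivalently $\theta_{\Lambda,P}\equiv0$ for every harmonic $P$ of degree $\ell$), so $\ell\in T(\frac{1}{\sqrt{2m}}\Lambda_{2m})$ for all $m$.

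I do not anticipate a genuine obstacle. Once the Hecke--Schoenberg modularity (already invoked in the text) is granted, the whole argument reduces to the single linear-algebraic observation that a finite-dimensional space of modular forms replaces the condition ``$a_{\Lambda,P}(m)=0$ for all harmonic $P$ of degree $\ell$'' by finitely many conditions on a fixed basis. The only points needing a little care are the homogeneity normalization $(2m)^{-\ell/2}$ in the second step and the bookkeeping for empty shells.
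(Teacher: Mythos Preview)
Your proposal is correct and follows essentially the same argument as the paper: one direction is immediate since each $P_j\in{\rm Harm}_\ell(\R^d)$, and the converse uses that every $\theta_{\Lambda,P}$ is an $\R$-linear combination of the $\theta_{\Lambda,P_j}$, whence comparing $q^m$-coefficients gives $a_{\Lambda,P}(m)=0$. You simply spell out a few points (existence of the basis, the homogeneity rescaling, the empty-shell bookkeeping) that the paper leaves implicit.
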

\begin{proof}
Suppose $ \ell \in T\left(\frac{1}{\sqrt{2m}} \Lambda_{2m}\right)$, which is equivalent to $a_{\Lambda,P}(m)=0$ for any $P\in {\rm Harm}_\ell(\R^d)$ of degree $\ell$.
Then it follows that $a_{\Lambda,P_j}(m)=0$ for any $j=1,\ldots,g$.
On the other hand, for any $P\in {\rm Harm}_\ell (\R^d)$, there exist $c_1,\ldots,c_g\in \R$ such that 
\[ \theta_{\Lambda,P}(z)=c_1\theta_{\Lambda,P_1}(z)+\cdots+ c_g \theta_{\Lambda,P_g}(z),\]
from which the opposite implication follows.
The ``In particular" part is clear, because if $\dim \big( {\rm Im}\, \vartheta_{\Lambda,\ell} \big)=0$, then $a_{\Lambda,P}(m)=0$ for all $P\in {\rm Harm}_\ell(\R^d)$.
\end{proof}

Using Proposition \ref{prop:modular-to-design} together with a basis of the image of the map $ \vartheta_{\Lambda,\ell}:{\rm Harm}_{\ell}(\R^d) \rightarrow S_{d/2+\ell}(\Gamma_1(N))_\R $, we can check whether $\ell $ is in the harmonic strength of $\frac{1}{\sqrt{2m}} \Lambda_{2m}$ for at least up to certain $m$ (by a computer), or for all $m$ if we are lucky.
In particular, when $\dim {\rm Im}\, \vartheta_{\Lambda,\ell}=1$, the problem whether $\ell \in   T\left(\frac{1}{\sqrt{2m}} \Lambda_{2m}\right)$ is directly connected to the vanishing problem of the $m$-th Fourier coefficients of a cusp form, so it may turn out to be a challenging problem.

In the following sections, we will discuss what is known and expected about the harmonic strength for individual important lattices.

\subsection{The $E_8$ lattice}
A prototypical example of the use of Proposition \ref{prop:modular-to-design} is the $E_8$ lattice, the unique even integral lattice of level 1 and discriminant 1 (unimodular) in $\R^8$ (see \cite[Chap.~4, \S8]{CS} for the definition).
Its $2m$-shell is not empty for all $m\ge1$, which can be shown by the identity $\theta_{E_8,1}(z)=1+240\sum_{m\ge1}\sigma_{3}(m) q^m$ with the normalized Eisenstein series of weight 4 on ${\rm SL}_2(\Z)$, where $\sigma_k(m)=\sum_{d\mid m}d^k$ is the divisor function (see also \S\ref{sec:numerical_data} for the first several values of $|(E_8)_{2m}|$).

The $E_8$ lattice has many good properties. 
For our purpose, we recall Waldspurger's result \cite{Waldspurger79} which determines the image of the map $\vartheta_{E_8,\ell}$.

\begin{theorem}\label{thm:E_8}
Let $\ell\in 2\N$. 
For the $E_8$ lattice in $\R^8$, we have
\[ {\rm Im}\, \vartheta_{E_8,\ell} = S_{4+\ell}({\rm SL}_2(\Z))_\R.\]
\end{theorem}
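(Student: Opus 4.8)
The inclusion ${\rm Im}\,\vartheta_{E_8,\ell}\subseteq S_{4+\ell}({\rm SL}_2(\Z))_\R$ is already in hand by the Hecke--Schoenberg fact recalled above (together with the observation, also recalled above, that weighted theta series of positive harmonic degree are cusp forms; here the level is $N=1$ because $E_8$ is unimodular), so the whole content of the theorem is the reverse inclusion, that is, surjectivity of $\vartheta_{E_8,\ell}$ onto the full space of cusp forms. My plan is to prove this by a Hecke-equivariance argument. First I would record the elementary reduction that, since ${\rm Aut}(E_8)$ is the Weyl group $W(E_8)$ and it stabilises every shell $(E_8)_{2m}$, the coefficient $a_{E_8,P}(m)=\sum_{x\in (E_8)_{2m}}P(x)$ depends only on the image of $P$ in the space ${\rm Harm}_\ell(\R^8)^{W(E_8)}$ of $W(E_8)$-invariant harmonic polynomials; thus $\vartheta_{E_8,\ell}$ factors through that space. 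It is important to note, though, that this is \emph{not} enough to finish by a dimension count: from the fundamental degrees $2,8,12,14,18,20,24,30$ of $W(E_8)$ one gets $\sum_\ell \dim\big({\rm Harm}_\ell(\R^8)^{W(E_8)}\big)t^\ell=\prod_{d\in\{8,12,14,18,20,24,30\}}(1-t^d)^{-1}$, whereas $\sum_\ell \dim S_{4+\ell}({\rm SL}_2(\Z))\,t^\ell=\frac{t^{8}}{(1-t^4)(1-t^6)}$, and while these agree for $2\le\ell\le 22$, at $\ell=24$ they read $3$ and $2$ respectively. So $\vartheta_{E_8,\ell}$ has a nonzero kernel for $\ell\ge 24$, and the theorem is genuinely a surjectivity statement.

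Next I would establish that ${\rm Im}\,\vartheta_{E_8,\ell}$ is stable under all Hecke operators $T_n$ on $S_{4+\ell}({\rm SL}_2(\Z))$. This is the Eichler commutation relation for theta series: $T_p$ sends weighted theta series of lattices in a fixed genus to $\Z$-linear combinations of such, and since the genus of $E_8$ has class number one, $T_p\,\theta_{E_8,P}=\theta_{E_8,P'}$ for some $P'\in{\rm Harm}_\ell(\R^8)$. As the Hecke operators act semisimply on $S_{4+\ell}({\rm SL}_2(\Z))$ with one-dimensional joint eigenspaces (level-one multiplicity one), any Hecke-stable subspace is spanned by a set of normalised Hecke eigenforms. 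It therefore remains to prove that every normalised Hecke eigenform $f\in S_{4+\ell}({\rm SL}_2(\Z))$ lies in ${\rm Im}\,\vartheta_{E_8,\ell}$. Here I would use the Petersson inner product: ${\rm Im}\,\vartheta_{E_8,\ell}$ being Hecke-stable, so is its Petersson-orthogonal complement, and distinct eigenforms are orthogonal, so $f$ lies in the image as soon as $\langle f,\theta_{E_8,P}\rangle\neq0$ for some $P\in{\rm Harm}_\ell(\R^8)$.

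The final and decisive step is to evaluate $\langle f,\theta_{E_8,P}\rangle$ and show it can be made nonzero. I would compute it by Rankin--Selberg unfolding --- equivalently, by the Siegel--Weil formula, or the doubling integral for the dual pair $\big(\widetilde{\rm SL}_2,\,{\rm O}(8)\big)$: up to explicit nonzero constants the pairing splits as an archimedean zeta integral (pairing the restriction of $P$ to $\mathbb{S}^7$ against the archimedean component of $f$) times a critical special value of an $L$-function of $f$, and one arranges both factors to be nonzero by a suitable choice of $P$. The finite places are harmless, since $E_8$ is $p$-adically self-dual for every $p$ and the local theta lifts are the expected unramified ones, so the only real computation is archimedean; this is Waldspurger's theorem. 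The hard part is exactly this non-vanishing --- equivalently, the assertion that no cuspidal Hecke eigenform of weight $4+\ell$ on ${\rm SL}_2(\Z)$ is Petersson-orthogonal to all weighted theta series of $E_8$: the Hecke-stability and the multiplicity-one reduction are formal, and the inclusion ``$\subseteq$'' is given, but surjectivity cannot be obtained by comparing dimensions (the map is not injective for $\ell\ge24$) and must be extracted from the non-vanishing of the relevant period or $L$-value.
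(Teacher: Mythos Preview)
The paper does not prove this theorem at all: it simply records it as Waldspurger's result and cites \cite{Waldspurger79}. So there is no ``paper's own proof'' to compare against beyond that attribution.

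Your outline is a faithful sketch of the strategy underlying Waldspurger's theorem, and in fact supplies considerably more detail than the paper does. The reduction to Hecke-stability via Eichler's commutation relation (using that the genus of $E_8$ has class number one), the use of multiplicity one to reduce surjectivity to showing no Hecke eigenform is Petersson-orthogonal to all $\theta_{E_8,P}$, and the identification of that orthogonality with a vanishing $L$-value via a Rankin--Selberg/doubling computation are all correct in spirit. Your side calculation that $\dim{\rm Harm}_\ell(\R^8)^{W(E_8)}$ and $\dim S_{4+\ell}({\rm SL}_2(\Z))$ first disagree at $\ell=24$ (values $3$ vs.\ $2$) is right and is a nice way to see that the theorem is genuinely about surjectivity rather than isomorphism.

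Two small caveats. First, your formula $T_p\,\theta_{E_8,P}=\theta_{E_8,P'}$ is literally true because $\theta_{E_8,-}$ is linear in $P$, but the mechanism is that $T_p$ produces a sum over $p$-neighbours, each isometric to $E_8$; you might say this explicitly. Second, the phrase ``one arranges both factors to be nonzero'' hides exactly the substance of Waldspurger's paper: the archimedean integral is a local computation, but the non-vanishing of the relevant special $L$-value is the genuine input, and you are right to flag it as the hard step rather than to claim it as obvious. As a proof \emph{proposal} this is entirely appropriate; as a self-contained proof it would of course need that non-vanishing argument supplied.
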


From the theory of modular forms, for $\ell\in 2\N$, we have $\dim S_{4+\ell}({\rm SL}_2(\Z))_\R = \left[\frac{\ell}{4}\right] - \left[ \frac{\ell+2}{6}\right]$.
The first non-trivial cusp form appears when $\ell=8$ and it is given by $\eta(z)^{24}=\sum_{m\ge1}\tau(m)q^m$.
Now Proposition \ref{prop:modular-to-design} leads to the following corollary (note that this result can be shown without using the deep result of Waldspurger, Theorem \ref{thm:E_8}; see \cite{BannaiBannai09} and \cite[Proposition B]{Pache05}).

\begin{corollary}
For any $m\in \Z_{\ge1}$, we have
\[\{2,4,6,10\}\subset T\left(\frac{1}{\sqrt{2m}} \big(E_8\big)_{2m}\right),\]
and $8\in T\left(\frac{1}{\sqrt{2m}} \big(E_8\big)_{2m}\right)$ if and only if $\tau(m)=0$.
\end{corollary}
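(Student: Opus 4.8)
The plan is to invoke Proposition~\ref{prop:modular-to-design} with $\Lambda=E_8$, which is even integral of level $N=1$ in $\R^8$, so that the relevant target space is $M_{d/2+\ell}(\Gamma_1(N))_\R=M_{4+\ell}({\rm SL}_2(\Z))_\R$ and the relevant cusp forms are $S_{4+\ell}({\rm SL}_2(\Z))_\R$. First I would record that the hypothesis ``$\Lambda_m$ non-empty'' is automatic here: the identity $\theta_{E_8,1}(z)=1+240\sum_{m\ge1}\sigma_3(m)q^m$ recalled above gives $|(E_8)_{2m}|=240\,\sigma_3(m)\ge 240>0$ for every $m\ge1$, so the equivalences in Proposition~\ref{prop:modular-to-design} apply to all $m\in\Z_{\ge1}$ without exception.

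Next, for $\ell\in\{2,4,6,10\}$ the corresponding weights are $4+\ell\in\{6,8,10,14\}$, and the dimension formula $\dim S_{4+\ell}({\rm SL}_2(\Z))_\R=[\ell/4]-[(\ell+2)/6]$ evaluates to $0$ in each of these four cases. Since ${\rm Im}\,\vartheta_{E_8,\ell}$ is a subspace of $S_{4+\ell}({\rm SL}_2(\Z))_\R$, this forces $\dim({\rm Im}\,\vartheta_{E_8,\ell})=0$, and the ``In particular'' clause of Proposition~\ref{prop:modular-to-design} yields $\ell\in T\big(\tfrac{1}{\sqrt{2m}}(E_8)_{2m}\big)$ for all $m$; note that this part uses only the dimension formula, not Waldspurger's Theorem~\ref{thm:E_8}. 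For $\ell=8$ the weight is $12$ and $\dim S_{12}({\rm SL}_2(\Z))_\R=1$, spanned by $\Delta(z)=\eta(z)^{24}=\sum_{m\ge1}\tau(m)q^m$. By Theorem~\ref{thm:E_8}, ${\rm Im}\,\vartheta_{E_8,8}=S_{12}({\rm SL}_2(\Z))_\R=\langle\Delta\rangle_\R$, so in Proposition~\ref{prop:modular-to-design} one may take $g=1$ and a single $P_1\in{\rm Harm}_8(\R^8)$ with $\theta_{E_8,P_1}(z)=c\,\Delta(z)$ for some $c\ne0$; hence $a_{E_8,P_1}(m)=c\,\tau(m)$, and Proposition~\ref{prop:modular-to-design} gives $8\in T\big(\tfrac{1}{\sqrt{2m}}(E_8)_{2m}\big)\Longleftrightarrow a_{E_8,P_1}(m)=0\Longleftrightarrow\tau(m)=0$, as asserted.

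The only point with genuine content is the non-vanishing of ${\rm Im}\,\vartheta_{E_8,8}$ (which, since $\dim S_{12}=1$, is exactly the equality ${\rm Im}\,\vartheta_{E_8,8}=S_{12}$ used above). The quickest route is to cite Waldspurger's Theorem~\ref{thm:E_8}, and I expect this to be the main ``obstacle'' only in the sense that one would prefer to avoid such a deep input. It is in fact enough to exhibit one harmonic polynomial $P\in{\rm Harm}_8(\R^8)$ with $\sum_{x\in(E_8)_2}P(x)=a_{E_8,P}(1)\ne0$, i.e.\ to verify that the $E_8$ root system is not a spherical $8$-design — a finite check over the $240$ minimal vectors, carried out in \cite{Pache05,BannaiBannai09}. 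Indeed, were ${\rm Im}\,\vartheta_{E_8,8}$ zero, Proposition~\ref{prop:modular-to-design} would force $8\in T\big(\tfrac{1}{\sqrt2}(E_8)_2\big)$, contradicting that check; hence ${\rm Im}\,\vartheta_{E_8,8}\ne 0$ and the argument is complete.
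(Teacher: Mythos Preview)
Your proof is correct and follows essentially the same route as the paper: apply Proposition~\ref{prop:modular-to-design} with $\Lambda=E_8$, using the dimension formula to get $\dim S_{4+\ell}({\rm SL}_2(\Z))_\R=0$ for $\ell\in\{2,4,6,10\}$ and $=1$ (spanned by $\eta(z)^{24}$) for $\ell=8$, together with the non-emptiness of every $2m$-shell. Your remark that the $\ell=8$ case only needs ${\rm Im}\,\vartheta_{E_8,8}\neq0$, verifiable by a finite check on the $240$ roots rather than the full strength of Theorem~\ref{thm:E_8}, mirrors the paper's parenthetical observation citing \cite{BannaiBannai09} and \cite{Pache05}.
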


According to Lehmer's conjecture on the non-vanishing of Ramanujan's $\tau$-function \cite{Lehmer47} (i.e., $\tau(m)\neq0$ for all $m\in \Z_{\ge1}$), the harmonic strength of the $2m$-shell of the $E_8$ lattice would never contain eight.
Moreover, one can numerically observe up to certain $\ell$ and $m$ that $m$-th Fourier coefficients of a basis of $S_{4+\ell}({\rm SL}_2(\Z))_\R$ are not simultaneously zero (see Appendix \ref{sec:numerical_data}).
Overall, proving the equality $\{2,4,6,10\}\stackrel{?}{=} T\left(\frac{1}{\sqrt{2m}} \big(E_8\big)_{2m}\right)$ for all $m\ge1$ seems a very challenging problem.
Here, we follow the convention that all positive odd integers are excluded from the harmonic strength.

\subsection{The $D_n$ lattice}\label{sec:Dn}
For $n\in \Z_{\ge3}$, let $D_n=\{x=(x_1,\ldots,x_n)\in \Z^n \mid x_1+\cdots+x_n\equiv 0\bmod{2}\}$ be the $D_n$ lattice.
One has $(D_n)_{2m}=\{x\in \Z^n\mid x_1^2+\cdots+x_n^2=2m\}$.
The case $n=4$ is treated in \S1, but we repeat it:
The $D_4$ lattice is known to be the unique even integral lattice of level 2 and discriminant 4 in $\R^4$.
The cardinality of the $2m$-shell of $D_4$ is given by $|(D_4)_{2m}|=12\sum_{d\mid 2m}(1-(-1)^d)d$, which is a consequence of Jacobi's four square theorem.

The image of the map $\vartheta_{D_4,\ell}$ coincides with the space of newforms of level 2 (this fact is a folklore):
\[{\rm Im}\, \vartheta_{D_4,\ell}=S_{2+\ell}^{\rm new}(\Gamma_1(2))_\R.\] 
Thus, for all $m\ge1$ (see also Appendix \ref{sec:numerical_data}), one has
\[\{2,4,10\}\subset T\left(\frac{1}{\sqrt{2m}} \big(D_4\big)_{2m}\right).\]
As is stated in Theorem \ref{thm:main1}, the harmonic strength of the $2m$-shell of the $D_4$ lattice contains six if and only if $\tau_2(m)=0$, where $\sum_{m\ge1}\tau_2(m)=\eta(z)^8\eta(2z)^8$ is the unique cusp form of weight $8$ and level 2.

As a level 2 analogue of Theorem \ref{thm:E_8}, I observed 
\[ {\rm Im}\, \vartheta_{D_8,\ell} = S_{4+\ell}(\Gamma_1(2))_\R\]
up to $\ell=10$.
Note that by Jacobi's eight-square theorem (see references in \cite{William}) the cardinality is $|(D_8)_{2m}|=r_8(2m)=16\sum_{d\mid 2m} (-1)^d d^3\neq0$ for $m\in \N$.
By the dimension formula $\dim  S_{4+\ell}(\Gamma_1(2))_\R=\left[\frac{\ell}{4}\right]$, it follows that $\{2\}\subset T\left(\frac{1}{\sqrt{2m}} \big(D_8\big)_{2m}\right)$ for all $m\ge1$.
Similarly to Theorem \ref{thm:main1}, we have $4\in T\left(\frac{1}{\sqrt{2m}} \big(D_8\big)_{2m}\right)$ if and only if $\tau_2(m)=0$.
Moreover, letting 
\[ P_4(x)=7\sum_{1\le i\le 8}x_i^4-6 \sum_{1\le i<j\le 8} x_i^2x_j^2,\]
we get $\theta_{D_8,P_4}(z)=896 \eta(z)^8\eta(2z)^8$.
Using the congruences $P_4(x)\equiv \sum_{i=1}^8 x_i^2\bmod 3$ and $P_4(x)\equiv 2(\sum_{i=1}^8 x_i^2)^2 \bmod 5$, for each odd prime $p$, we obtain
\[ 896\tau_2(p)\equiv \begin{cases} 2p | (D_8)_{2p}|\equiv  2p(1+p^3) \equiv -p(1+p)\mod{3}, \\  2(2p)^2 | (D_8)_{2p}|  \equiv p^2(1+p^3)\equiv p(p+1)\mod{5},\end{cases} \]
where we have used $|(D_8)_{2p}|=16(7+7p^3)$.
This shows that $\tau_2(p)\equiv p(p+1)\bmod \ell$ for $\ell \in \{3,5\}$ and any odd prime $p$, which was shown exactly in the same manner in \cite{HNT}.
I am expecting that $\{2\}\stackrel{?}{=} T\left(\frac{1}{\sqrt{2m}} \big(D_8\big)_{2m}\right)$ holds for all $m\ge1$ (see Appendix \ref{sec:numerical_data}).

\subsection{The Leech lattice}
Let $\Lambda_{24}$ be the Leech lattice, the unique even unimodular lattice in $\R^{24}$ without roots (see \cite{CS,Ebeling} for the details). 
Its theta series is $\theta_{\Lambda_{24},1}(z)= 1+196560q^2+16773120q^3+\cdots\in M_{12}({\rm SL}_2(\Z))$.
Venkov \cite{Venkov84} showed that 
\begin{equation}\label{eq:leech}
{\rm Im}\, \vartheta_{\Lambda_{24},\ell} \subset \eta(z)^{48} M_{\ell -12}({\rm SL}_2(\Z))_\R,
\end{equation}
and hence, 
\[\{2,4,6,8,10,14\}\subset T\left(\frac{1}{\sqrt{2m}} \big(\Lambda_{24}\big)_{2m}\right)\] 
for all $m\ge2$.
Moreover, $12\in T\left(\frac{1}{\sqrt{2m}} \big(\Lambda_{24}\big)_{2m}\right)$ if and only if $\sum_{n=1}^{m-1}\tau(n)\tau(m-n)=0$.
In contrast to the $E_8$ and $D_4$ lattices, we can not observe if the opposite inclusion of \eqref{eq:leech} holds.
It may occur that \eqref{eq:leech} is proper for some $\ell$ and such $\ell$ may lie in the harmonic strength of $\frac{1}{\sqrt{2m}} \big(\Lambda_{24}\big)_{2m}$ for some $m\ge3$.
The above result is a part of Venkov's study of the extremal even unimodular lattices \cite{Venkov84}.

\subsection{The Barnes-Wall lattice}
Let $BW_{16}$ be the Barnes-Wall lattice, the unique even integral lattice of level 2 and discriminant $2^8$ in $\R^{16}$ without roots (see \cite[Chap.~4, \S10]{CS} for the details). 
This is an example of extremal $2$-modular lattices.
We have $\theta_{BW_{16},1}(z)=1+4320q^2+61440q^3+\cdots$.
Similarly to the Leech lattice, one can show that
\[ {\rm Im}\, \vartheta_{BW_{16},\ell} \subset \eta(z)^{16}\eta(2z)^{16} M_{\ell -8}(\Gamma_1(2))_\R,\]
which implies
\[\{2,4,6\}\subset T\left(\frac{1}{\sqrt{2m}} \big(BW_{16}\big)_{2m}\right)\] 
for all $m\ge2$.
Again, $8\in T\left(\frac{1}{\sqrt{2m}} \big(BW_{16}\big)_{2m}\right)$ if and only if $\sum_{n=1}^{m-1}\tau_2(n)\tau_2(m-n)=0$.
It is pointed out in \cite[Remark 18.9]{Venkov01} that $10\in T\left(\frac{1}{\sqrt{2}} \big(BW_{16}\big)_{4}\right)$.
A generalization to the extremal $l$-modular lattices is studied in \cite{BachocVenkov01}.

\subsection{Two-dimensional lattice}
So far, we have seen that it is difficult to give a complete set of the harmonic strengths for all shells.
We now show examples of two-dimensional lattices for which the harmonic strength is fully determined.

Using modular forms with complex multiplication by Gaussian integers $\Z[i]\cong \Z^2$, 
Miezaki \cite{Miezaki13} showed that 
\[ T\left(\frac{1}{\sqrt{m}} \big(\Z^2\big)_{m}\right) =\{ 2\ell\in 2\N \mid \ell\equiv 1 \bmod 2\}\]
for all $m\in \Z_{\ge1}$ with $|(\Z^2)_m|\neq0$ (note that the lattice $\Z^2$ is not even). 
With a slightly general notion, Pandey \cite{Pandey} extended this type of result to the ring of imaginary quadratic integers whose class number is one.
From his result for the Eisenstein integers $\Z[(1+\sqrt{-3})/2]$, we can deduce
\[ T\left(\frac{1}{\sqrt{2m}} \big(A_2\big)_{2m}\right) =\{ 2\ell\in 2\N \mid \ell\equiv 1,2 \bmod 3\}\]
for non-empty $2m$-shell of the $A_2$ lattice.

\appendix

\section{Numerical results/data for $D_4,D_6,D_8,E_6,E_8$}\label{sec:numerical_data}
By examining the (simultaneous) non-vanishing of the Fourier coefficients of a basis of ${\rm Im}\, \vartheta_{\Lambda,\ell}$, we give numerical results for the harmonic strength for the cases when $\Lambda=D_4,D_6,D_8,E_6,E_8$.
Note that in these cases, corresponding modular forms are
\begin{align*}
&{\rm Im}\, \vartheta_{D_4,\ell}=S_{2+\ell}^{\rm new}(\Gamma_1(2))_\R,\\
&{\rm Im}\, \vartheta_{D_6,\ell}\subset S_{3+\ell}(\Gamma_1(4))_\R,\quad {\rm Im}\, \vartheta_{E_6,\ell}\subset S_{3+\ell}(\Gamma_1(3))_\R\\
&{\rm Im}\, \vartheta_{D_8,\ell}\stackrel{?}{=} S_{4+\ell}(\Gamma_1(2))_\R,\quad {\rm Im}\, \vartheta_{E_8,\ell}= S_{4+\ell}({\rm SL}_2(\Z))_\R.
\end{align*}
For convenience, we exhibit the dimension table of the image of the maps $\vartheta_{\Lambda,\ell}$ and the cardinality of the $2m$-shell (see \cite{Kato} for $\dim {\rm Im}\, \vartheta_{E_6,\ell}$ and \cite{CS} for the cardinality).
Note that for the cases $D_6$ and $D_8$, I have used the explicit construction of $D_n$-invariants obtained by Iwasaki \cite{Iwasaki}.
{\small
\begin{center}
\begin{tabular}{c|cccccccccccccccccccc}
$\ell$ & 2& 4&6&8&10&12&14&16&18&20&22&24\\ \hline
$\dim {\rm Im}\, \vartheta_{D_4,\ell}$&0&0&1& 1& 0& 2& 1& 1& 2& 2& 1& 3 \\
$\dim {\rm Im}\, \vartheta_{D_6,\ell}$&0&1&1& 2& 2& 3& 3\\
$\dim {\rm Im}\, \vartheta_{D_8,\ell}$&0&1&1& 2& 2&3\\
$\dim {\rm Im}\, \vartheta_{E_6,\ell}$&0&0&1& 1& 1& 2&2& 2& 3& 3& 3& 4 \\
$\dim {\rm Im}\, \vartheta_{E_8,\ell}$ &0&0&0& 1& 0& 1& 1& 1& 1& 2& 1&2
\end{tabular}
\end{center}

\begin{center}
\begin{tabular}{c|cccccccccccccccccccc}
$m$ & 1& 2&3&4&5&6&7&8&9\\ \hline
$|(D_4)_{2m}|$ & 24& 24&96&24&144&96&192&24&312\\
$|(D_6)_{2m}|$ & 60& 252&544&1020&1560&2080&3264&4092&4380\\
$|(D_8)_{2m}|$ & 112&1136&3136&9328&14112&31808&38528&74864&84784\\
$|(E_6)_{2m}|$ & 72& 270&720&936&2160&2214&3600&4590&6552\\
$|(E_8)_{2m}|$ & 240& 2160&6720&17520&30240&60480&82560&140400&181680
\end{tabular}
\end{center}
}

To describe the results on the harmonic strength, let 
\[ T_{\le L} (X)= \left\{\ell\in 2\N  \ \middle| \  \sum_{x\in X}P(x)=0, \quad \forall P\in {\rm Harm}_\ell (\R^d), \ \ell \le L\right\}.\]

\begin{proposition}
We have checked the equalities{\small 
\begin{align*}
 T_{\le22}\left(\frac{1}{\sqrt{2m}} \big(D_4\big)_{2m}\right) &=\{2,4,10\} \quad (m\le 2\cdot 10^4),\\
 T_{\le14}\left(\frac{1}{\sqrt{2m}} \big(D_6\big)_{2m}\right) &=\{2\} \quad (m\le 2\cdot 10^4),\\
 T_{\le12}\left(\frac{1}{\sqrt{2m}} \big(D_8\big)_{2m}\right) &=\{2\} \quad (m\le 2\cdot 10^4),\\
   T_{\le10}\left(\frac{1}{\sqrt{2m}} \big(E_6\big)_{2m}\right) &=\{2,4\} \quad (m\le 2\cdot 10^4),\\
  T_{\le32}\left(\frac{1}{\sqrt{2m}} \big(E_8\big)_{2m}\right) &=\{2,4,6,10\} \quad (m\le 2\cdot 10^4).
\end{align*}}
\end{proposition}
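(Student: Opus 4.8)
\noindent\emph{Proof sketch.} The assertion is a finite verification, and the plan is to make Proposition~\ref{prop:modular-to-design} effective for each of the five lattices. First, for each $\Lambda\in\{D_4,D_6,D_8,E_6,E_8\}$ of rank $d$ and each even $\ell$ up to the relevant bound $L$, I would produce an explicit basis $\{\theta_{\Lambda,P_1}(z),\ldots,\theta_{\Lambda,P_g}(z)\}$ of ${\rm Im}\,\vartheta_{\Lambda,\ell}$. Since the automorphism group ${\rm Aut}(\Lambda)$ permutes each shell $\Lambda_{2m}$, one has $a_{\Lambda,P}(m)=a_{\Lambda,\bar P}(m)$, where $\bar P$ is the average of $P$ over ${\rm Aut}(\Lambda)$; hence ${\rm Im}\,\vartheta_{\Lambda,\ell}$ is already spanned by the weighted theta series of the ${\rm Aut}(\Lambda)$-invariant harmonic polynomials of degree $\ell$. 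For $D_6$ and $D_8$ such invariants are written down explicitly by Iwasaki \cite{Iwasaki}; for $E_6$ and $E_8$ one uses the classical generators of the invariant ring of the corresponding Weyl group; and for $D_4$ we already know ${\rm Im}\,\vartheta_{D_4,\ell}=S^{\rm new}_{2+\ell}(\Gamma_1(2))_\R$ from \S\ref{sec:Dn}. Choosing a maximal linearly independent subset produces the $P_j$, and the number $g$ should agree with the dimension table. For those $\ell$ with $\dim{\rm Im}\,\vartheta_{\Lambda,\ell}=0$ (namely $\ell\in\{2,4,10\}$ for $D_4$, $\ell=2$ for $D_6$ and $D_8$, $\ell\in\{2,4\}$ for $E_6$, and $\ell\in\{2,4,6,10\}$ for $E_8$) the ``in particular'' part of Proposition~\ref{prop:modular-to-design} already places $\ell$ into $T(\tfrac{1}{\sqrt{2m}}\Lambda_{2m})$ for \emph{every} $m$; these are exactly the sets appearing on the right-hand sides.

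Second, for each of the remaining $\ell$ (those with $\dim{\rm Im}\,\vartheta_{\Lambda,\ell}>0$), I would identify each $\theta_{\Lambda,P_j}(z)$ precisely as an element of the ambient space $M_{d/2+\ell}(\Gamma_1(N))_\R$. Evaluating $a_{\Lambda,P_j}(m)$ directly for $m$ up to the Sturm bound of that space---a cheap computation, since the shells involved are small---determines $\theta_{\Lambda,P_j}(z)$ as an explicit combination of eta-quotients, Eisenstein series, and Hecke eigenforms; for $D_8$ this simultaneously confirms ${\rm Im}\,\vartheta_{D_8,\ell}=S_{4+\ell}(\Gamma_1(2))_\R$ in the checked range. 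The point of this identification is that, once the $P_j$ are expressed through newforms, the coefficient $a_{\Lambda,P_j}(m)$ for large $m$ is obtained quickly from multiplicativity and the Hecke recursions, so one never enumerates the $O(m^{d/2-1})$ points of a large shell.

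Finally, for every $m\le 2\cdot10^4$ and every such $\ell$ I would check that the vector $\bigl(a_{\Lambda,P_1}(m),\ldots,a_{\Lambda,P_g}(m)\bigr)$ is nonzero; by Proposition~\ref{prop:modular-to-design} this certifies $\ell\notin T(\tfrac{1}{\sqrt{2m}}\Lambda_{2m})$. Together with the automatic inclusions above, this yields precisely the claimed equalities. (For $D_4$ with $\ell=6$ this is the non-vanishing $\tau_2(m)\neq0$, which holds a fortiori from the range $m\le10^8$ mentioned in \S1; for $E_8$ with $\ell=8$ it is the analogous check $\tau(m)\neq0$.) The whole computation can be carried out in PARI/GP, in line with the numerics reported in \S1.

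The main obstacle is computational scaling: a naive evaluation of $\sum_{x\in\Lambda_{2m}}P(x)$ for all $m\le 2\cdot10^4$ is infeasible for $D_8$ and $E_8$, whose shells contain $O(m^3)$ points. The plan sidesteps this by brute-forcing only up to the Sturm bound and then working inside the Hecke algebra; the remaining delicate points are the enumeration of ${\rm Aut}(\Lambda)$-invariant harmonics in the higher degrees and the rigorous certification of the linear independence---hence the exact dimension---of their weighted theta series.
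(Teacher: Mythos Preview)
Your plan is correct and mirrors the paper's own approach: the proposition is a finite numerical verification, and the paper's only ``proof'' is the sentence preceding it---``By examining the (simultaneous) non-vanishing of the Fourier coefficients of a basis of ${\rm Im}\,\vartheta_{\Lambda,\ell}$''---together with the dimension table and the references to Iwasaki \cite{Iwasaki} and Kato \cite{Kato} for the invariant harmonics. Your write-up simply fills in the computational details (reduction to ${\rm Aut}(\Lambda)$-invariant harmonics, identification via the Sturm bound, Hecke recursions for large $m$) that the paper leaves implicit.
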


\section*{Acknowledgments}
I would like to thank Tadashi Miyazaki for giving me the opportunity to talk at the conference.
I am also grateful to Hiroshi Nozaki and Masatake Hirao for valuable discussions, comments and suggestions.
This work is partially supported by
JSPS KAKENHI Grant Number 20K14294 and 23K03034, and the Research Institute for Mathematical Sciences,
an International Joint Usage/Research Center located in Kyoto University.


\vspace{2ex}
\noindent
Koji Tasaka\\
School of Information Science \& Technology, \\
Aichi Prefectural University, \\
Nagakute 480-1198, JAPAN\\
E-mail address: tasaka@ist.aichi-pu.ac.jp

\end{document}